\patchcmd{\@addmarginpar}{\ifodd\c@page}{\ifodd\c@page\@tempcnta\m@ne}{}{}
\DeclareSymbolFont{cyrletters}{OT2}{wncyr}{m}{n}
\DeclareMathSymbol{\Sha}{\mathalpha}{cyrletters}{"58}
\theoremstyle{definition}
\newtheorem*{Definition*}{Definition}
\theoremstyle{definition}
\newtheorem*{Proposition*}{Proposition}
\theoremstyle{definition}
\newtheorem*{Theorem*}{Theorem}
\theoremstyle{definition}
\newtheorem*{Corollary*}{Corollary}
\theoremstyle{definition}
\newtheorem*{Conjecture*}{Conjecture}
\theoremstyle{definition}
\theoremstyle{definition} 
\newtheorem{sProposition}[subsection]{Proposition}
\theoremstyle{definition} 
\newtheorem{sDefinition}[subsection]{Definition}
\theoremstyle{definition}
\theoremstyle{definition}
\theoremstyle{definition}
\theoremstyle{definition}
\theoremstyle{definition} 
\newtheorem{sTheorem}[subsection]{Theorem}
\theoremstyle{definition}
\theoremstyle{definition}
\theoremstyle{definition}
\theoremstyle{definition}
\theoremstyle{definition} 
\newtheorem{sRemark}[subsection]{Remark}
\theoremstyle{definition}
\theoremstyle{definition}
\theoremstyle{definition}
\theoremstyle{definition}
\let\oldmarginpar\marginpar
\renewcommand\marginpar[1]{\-\oldmarginpar[\raggedleft\footnotesize #1]%
{\raggedright\footnotesize #1}}
\newcommand{\Rep}{\operatorname{\bf{Rep}}}
\newcommand{\Vect}{\operatorname{\bf{Vect}}}
\newcommand{\xto}{\xrightarrow}
\newcommand{\gr}{\operatorname{gr}}
\newcommand{\Spec}{\operatorname{Spec}}
\newcommand{\Hom}{\operatorname{Hom}}
\newcommand{\Isom}{\operatorname{Isom}}
\newcommand{\Aut}{\mathrm{Aut}\,}
\newcommand{\Mod}{\operatorname{Mod}}
\newcommand{\m}[1]{\mathrm{#1}}
\newcommand{\fk}[1]{\mathfrak{#1}}
\newcommand{\bb}[1]{\mathbb{#1}}
\newcommand{\si}{\sigma}
\newcommand{\ze}{\zeta}
\newcommand{\ga}{\gamma}
\newcommand{\om}{\omega}
\newcommand{\ep}{\epsilon}
\newcommand{\Gm}{{\mathbb{G}_m}}
\newcommand{\CC}{\bb{C}}
\newcommand{\GG}{\mathbb{G}}
\newcommand{\QQ}{\bb{Q}}
\newcommand{\PP}{\bb{P}}
\newcommand{\pP}{\fk{p}}
\newcommand{\Uu}{\mathcal{U}}
\newcommand{\uU}{\mathfrak{u}}
\newcommand{\cC}{\mathfrak{c}}
\renewcommand{\AA}{\bb{A}}
\newcommand{\Oo}{\mathcal{O}}
\newcommand{\Aa}{\mathcal{A}}
\newcommand{\inv}{^{-1}}
\newcommand{\areq}{\ar@{=}}
\newcommand{\suphook}{\ar@{^(->}}
\newcommand{\subhook}{\ar@{_(->}}
\newcommand{\smses}[6]
{
\[
\xymatrix{
1 \ar[r] &
#1 \ar[r]_-{#2} &
#3 \ar[r]_-{#4} &
#5 \ar[r] \ar@/_1.5pc/[l]_-{#6} &
1
}
\]
}
\newcommand{\thrpl}{\PP^1 \setminus \{0,1,\infty\}}
\newcommand{\dR}{{\rm {dR}}}
\newcommand{\per}{\operatorname{per}}
\newcommand{\un}{\m{un}}
\newcommand{\one}{\mathbbm{1}}
\newcommand{\can}{\m{can}}
\newcommand{\opnm}{\operatorname}
\newcommand{\new}{\newcommand}
\new{\parnonumber}{\medskip\noindent }
\new{\oo}{$\infty$}
\new{\Aaug}{\opnm{\Aa\m{ug}}}
\new{\MYM}{\opnm{M_YM}}
\new{\MTM}{\opnm{MTM}}
\new{\Emb}{\opnm{Emb}}
\new{\cber}{\fk{c}_\m{Ber}}
\new{\Cr}{\opnm{Cr}}
\new{\Inv}{\opnm{Inv}}
\new{\pber}{\pP_\m{Ber}}
\new{\Fratila}{Fr\u{a}\c{t}il\u{a} }
\new{\TM}{\opnm{TM}}
\new{\comment}[1]{}
\title[On Andr\'e periods]{On Andr\'e periods of mixed Tate motives}
\author{Ishai Dan-Cohen}
\thanks{\textbf{Grant Acknowledgement:} This work was supported by ISF grant number 621/21. \textbf{Political Disclaimer:} The author asks not to be considered responsible for the actions of any government that does not fully embrace the principles of democracy.}
\date{\today}
\begin{document}

\maketitle

%\setcounter{tocdepth}{1}
%\tableofcontents

\raggedbottom
\SelectTips{cm}{11}

\begin{abstract}
In this note, we show that the $p$-adic periods of motives introduced recently by Ancona and \Fratila (``Andr\'e periods'') reduce to the classically studied notion in the case of Mixed Tate motives in connection with $p$-adic multiple zeta values and $p$-adic multiple polylogarithms. We also connect Andr\'e periods with Coleman integration by observing that the Frobenius-fixed de Rham paths of Besser and Vologodsky come from motivic paths in characteristic $p$ (unconditionally in the mixed Tate setting, conditionally in general). We use this to realize special values of $p$-adic multiple polylogarithms as Andr\'e periods in a concrete way.  

\medskip

\noindent
\textbf{Keywords:} $p$-Adic periods, $p$-adic integration, mixed Tate motives

\medskip

\noindent
\textbf{2020 Mathematics Subject Classification:}
 14F30, 14C15, 11G55

\end{abstract}

%%%%%%%%%%%%%%%%%%%%%%%%%%%%%
\section{Introduction}%%%%%%%%%%%%%
%%%%%%%%%%%%%%%%%%%%%%%%%%

The category of mixed Tate motives over an open integer ring\footnote{By this we mean a finite localization of the ring of integers in a number field.}
or a number field possesses a notion of \emph{$p$-adic period} which differs from the $p$-adic periods of $p$-adic Hodge theory. At first glance, this notion may appear to be a peculiarity of the mixed Tate setting.
Yet it plays a key role in the inner mechanisms of mixed Tate Chabauty-Kim theory. It also connects the study of $p$-adic iterated integrals with Goncharov's theory of \textit{motivic iterated integrals}, and allows one to investigate Goncharov's conjectures from a $p$-adic point of view. Lastly, it forms the basis for the so-called $p$-adic period conjecture.\footnote{For mixed Tate motives, see, for instance \cite{DelGon, BurgosFresan, dupont2024introduction}. For $p$-adic periods of mixed Tate motives, see, for instance \cite{YamashitaBounds, chatzistamatiou2011p, FurushoPMZVII}. For the applications discussed above, see, for instance, \cite{CKtwo, mtmue, mtmueii, PolGonI, PolGonII, DCJar, JarossaySaettoneZehavi, ludtke2025refinedZ16, BettsLudtkeRefined, BettsLudtkeSection}.}

In this theory (and in stark contrast to the classical theory of complex periods, as well as $p$-adic Hodge theory), the period of the Tate motive vanishes.\footnote{See Remark \ref{RM55}.}
In recent work \cite{DCCorwinPer}, D. Corwin and I develop a gadget that extends the $p$-adic period map beyond the mixed Tate setting. As in the mixed Tate case, our gadget assigns naught to pure motives (which must now be replaced by certain systems of realizations).\footnote{See Definition 5.6 of \cite{DCCorwinPer}; Remark \ref{RM55} below applies with little change.}
As such, it interacts usefully with cocycles for unipotent fundamental groups, and helps to begin the process of extending the mixed Tate Chabauty-Kim theory of \cite{mtmue, mtmueii, brown2017integral, PolGonI, PolGonII} (and others) beyond the mixed Tate setting. 

However, the literature has long since contained various notions of ``$p$-adic periods'' of a similar sort, and in this setting, it is not considered to be the case that $p$-adic periods of all pure motives are trivial.
These notions have finally been put on solid ground by Ancona and \Fratila  \cite{AnconaFratila}, who drew inspiration especially from work of Y. Andr\'e. 

Ancona and \Fratila work primarily in the setting of pure motives, where various concrete candidates for the correct Tannakian category exist. Working with \textit{mixed} motives means adding a conjectural layer, which they do only as an afterthought. However, \textit{Mixed Tate motives} provide a microcosm, essentially orthogonal to the setting of pure motives, in which the correct Tannakian category \textit{is} available, at least over number fields and finite fields. Our purpose in this note is to show that their notion too extends the classical notion of $p$-adic periods of mixed Tate motives.\footnote{See Section \ref{SecRev} for a review of the latter.} 

Recall, for instance, that if $\MTM(Z)$ is the category of mixed Tate motives over an open integer scheme $Z \subset \Spec \Oo_K$ with fraction field $K$,\footnote{By this we mean an open subscheme of $\Spec \Oo_K$ for $K$ a number field.} then the complex period map
\[
\per^\CC: \Oo P(Z) \to \CC
\] 
is defined on the coordinate ring of a certain principal homogeneous space $P(Z)$. In this setting, elements of the coordinate ring $\Oo P(Z)$ are sometimes referred to as ``formal periods'', and so the period map assigns to a formal period its \textit{actual} period. 

Let 
\[
B^*: \MTM(Z) \to \Vect \QQ
\]
be the Betti realization functor associated to a choice of $\CC$-valued point of $Z$, and let
\[
\dR^*: \MTM(Z) \to \Vect K
\]
be the de Rham realization functor. Then (in one formulation)
\[
P(Z):= \Isom^\otimes(B^*_K, \dR^*) 
\]
is the Tannakian path torsor, integration gives rise to a complex point
\[
\Spec \CC \to P(Z),
\]
and $\per^\CC$ is the induced map of coordinate rings. 

Turning to a local field $K_\pP$ of $K$ at a prime $\pP \in Z$, the classical theory of $p$-adic periods of mixed Tate motives is based on a map of rings
\[
\per^\m{Cl}: \Oo U_\dR(Z) \to K_\pP,
\]
while the theory of Ancona-\Fratila (translated to our setting) is based on a map 
\[
\per^\m{AF}: \Oo H(Z) \to K_\pP.
\]
Both involve a principal homogeneous space ($U_\dR(Z)$, $H(Z)$) which stands in imperfect analogy to the earlier $P(Z)$. Moreover, the respective period maps involve apparently different ingredients. Roughly speaking, $\per^\m{Cl}$ compares the Hodge filtration and the Frobenius automorphism on the $p$-adic de Rham realization, whereas $\per^\m{AF}$ compares the rational de Rham realization with the crystalline realization, with emphasis on the factorization of the latter through the category of mixed Tate motives over the residue field at $\pP$. The main observation of this paper is that the two are nevertheless equivalent:

\begin{sTheorem}
\label{A7}
There exists an isomorphism of $K$-algebras of ``formal periods of mixed Tate motives'' commuting with the two period maps, as in the following diagram:
\[
\xymatrix
@ R = 2pt
{
\Oo H(Z) \ar[dd]_\sim \ar[dr]^-{\per^\m{AF}}
\\
& K_\pP \;.
\\
\Oo U_\dR(Z) \ar[ur]_-{\per^\m{Cl}}
}
\]
\end{sTheorem}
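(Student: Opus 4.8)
The plan is to identify both "rings of formal periods" with the ring of functions on the (trivial) torsor $H(Z)$, and both period maps with evaluation at a distinguished point, and then to check that this distinguished point is the same in the two descriptions. First I would establish the structural claim in the footnote to \ref{B2}: the functor $H(Z) = \Hom^\otimes(\ze \circ s_\pP^*, \dR^*)$ is a torsor under the prounipotent $K$-group $U_\dR(Z)$ appearing in the exact sequence $(*)$ of \ref{A3}, and it is a \emph{trivial} torsor (it has a $K$-point). The key input here is that $\ze \circ s_\pP^*$ is canonically isomorphic, after graded/semisimplification, to the graded de Rham fiber functor $\dR^* \circ \gr^W = (\om_\m{can})_K$: indeed $s_\pP^*$ is compatible with weight filtrations, and on the semisimple category $\TM(k)$ the functor $\ze$ computes exactly the graded pieces $\Hom(\QQ(i), \gr^W_i(-))$, which specialization matches with those over $Z$ because $s_\pP^* \QQ(i) = \QQ(i)$. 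Hence $H(Z)$ is an $(\om_\m{can})_K$-versus-$\dR^*$ torsor, which is precisely $\Isom^\otimes$-torsor structure giving a $U_\dR(Z)$-torsor after quotienting by the canonical splitting of $(*)_\m{can}$; triviality follows because the Hodge path $\pP^H$ of \ref{A6} (or equivalently the crystalline/Berthelot path of \ref{B3}) already furnishes a $K$-point. This yields a canonical isomorphism $\Oo H(Z) \cong \Oo U_\dR(Z)$ of $K$-algebras — this is the left vertical arrow.

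Next I would unwind what $\cber$ becomes under this identification. By construction (\ref{B2}) $\cber$ is: take $v \in \ze(s_\pP^* M)$, apply the crystalline realization, transport by the Berthelot isomorphism $\pP^\m{Ber}$, land in $K_\pP \otimes_K \dR^* M$. Comparing with \ref{B3}, the composite "crystalline path $\pP^\m{Cr}$, specialize, Berthelot" is exactly the map denoted $s^\pP_* \pP^\m{Cr}_{K_\pP}$, so $\cber$ is the $K_\pP$-point of $H(Z)$ corresponding, under the torsor identification, to $s^\pP_* \pP^\m{Cr}_{K_\pP} \in \Isom^\otimes(\dR^* \circ \gr^W, \dR^*)(K_\pP)$. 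Meanwhile, the identification $\Oo H(Z) \cong \Oo U_\dR(Z)$ was set up using the Hodge path $\pP^H$ as basepoint, so the $K_\pP$-point $\cber$ of $H(Z)$ corresponds to the $K_\pP$-point $s^\pP_* \pP^\m{Cr}_{K_\pP} \circ (\pP^H_{K_\pP})^{-1}$ of $U_\dR(Z)$ — which is \emph{verbatim} the classical period loop $\uU$ of \ref{B4}. Therefore $\per^\m{AF} = \cber^\sharp$ and $\per^\m{Cl} = \uU^\sharp$ agree after the isomorphism, i.e. the triangle commutes.

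I expect the main obstacle to be the first step, and specifically the care needed to make "$\ze \circ s_\pP^*$ is the graded de Rham fiber functor" precise and functorial enough to conclude $H(Z)$ is a torsor: one must check that $\ze \circ s_\pP^*$ really is a fiber functor (faithful exact $\otimes$-functor to $\Vect K$ after base change) and not merely lax-monoidal as stated, which uses that on $\MTM(Z)$ every object is an iterated extension of Tate twists and that $s_\pP^*$ preserves this structure, so that $\ze \circ s_\pP^*$ sees all the graded pieces. Once $H(Z)$ is known to be a $U_\dR(Z)$-torsor with a rational point, the rest is formal manipulation of Tannakian paths. A secondary point to verify is that the Berthelot comparison $\pP^\m{Ber}$ is itself a morphism of Tannakian path type — i.e. compatible with the $\otimes$-structure and with the $U_\dR$-actions on both sides — so that identifying $\cber$ with $s^\pP_*\pP^\m{Cr}_{K_\pP}$ is legitimate; this is standard but should be cited. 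No genuinely new computation is needed: the theorem is essentially the assertion that two different-looking but canonically isomorphic descriptions of the same $U_\dR(Z)$-torsor, based at two paths ($\pP^H$ vs.\ the Berthelot-crystalline path) whose "difference" is by definition $\uU$, produce the same period ring and map.
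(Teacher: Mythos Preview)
Your overall strategy matches the paper's: show $H(Z)$ is a trivial $U_\dR(Z)$-torsor, take the Hodge path as basepoint, and verify that under the resulting identification $\cber$ corresponds to $\uU$. The second half of your argument (that $\cber$ is the crystalline path precomposed with the inclusion $\iota:\ze\hookrightarrow\om_\can$, hence equals $\uU$ applied to the Hodge basepoint) is essentially the paper's proof verbatim.

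However, your first step contains a genuine error. You claim that $\ze\circ s_\pP^*$ is canonically isomorphic to the canonical fiber functor $\om_\can$ (equivalently to $\dR^*\circ\gr^W$), and you explicitly hope to verify that $\ze\circ s_\pP^*$ is a fiber functor. It is not. By definition $\ze(M)=\Hom(\QQ(0),M)$ picks out only the weight-$0$ isotypic piece: for instance $\ze(s_\pP^*\QQ(1))=0$, so $\ze\circ s_\pP^*$ is neither faithful nor exact. What is true is that there is an injective $\otimes$-natural transformation $\iota:\ze\hookrightarrow\om_\can$, not an isomorphism, and this is exactly what the paper uses (paragraph \ref{A13}). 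Consequently $H(Z)$ is not an $\Isom^\otimes$-torsor between two fiber functors, and your proposed route to the torsor structure does not go through.

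The paper's replacement for this step is a general Tannakian lemma (Proposition \ref{A10}, following Haines): for a closed subgroup $H<G$ one has $\Hom^\otimes(\Inv_H,\om)\cong\Spec\Oo(G)^H$. Applied with $G=G_\dR(Z)_{K_\pP}$ and $H=G_\Cr(Z)_{K_\pP}$ --- which by Quillen's computation is just $\GG_{m,K_\pP}$, so that $\Inv_H$ matches $\ze\circ s_\pP^*$ --- this gives $H(Z)_{K_\pP}\cong G_\dR(Z)_{K_\pP}/\GG_m\cong U_\dR(Z)_{K_\pP}$, using that the crystalline splitting $\si_\Cr$ is a section of $G_\dR\to\GG_m$. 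The torsor structure then descends to $K$ by fpqc descent (Proposition \ref{A12}). In short: the missing idea is that $\ze\circ s_\pP^*$ is the \emph{invariants} functor for a Levi subgroup, not a fiber functor, and one needs the $G/H$ description of $\Hom^\otimes(\Inv_H,\om)$ rather than the $\Isom^\otimes$ description of a torsor of fiber functors.
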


A priori, the $K$-algebra $\Oo H(Z)$ of ``formal Ancona-\Fratila periods of mixed Tate motives'' depends on the prime $\pP$. However, the prounipotent group $U_\dR(Z)$, which is just the prounipotent radical of the Tannakian Galois group
\[
G_\dR(Z) = \Aut^\otimes(\dR^*)
\]
does \textit{not} depend on $\pP$. Thus, one noteworthy consequence of theorem \ref{A7} is that $H(Z)$ is independent of $\pP$ up to canonical isomorphism.

Interesting examples of mixed Tate motives arise especially from the \textit{unipotent fundamental group} of the projective line with punctures
\[
\PP^1 \setminus \{a_1, \dots, a_n\}
\]
and its path torsors, and interesting examples of $p$-adic periods of mixed Tate motives come from those. These take the form of $p$-adic iterated integrals (known variously also as \textit{Coleman} or \textit{Coleman-Besser functions}) and include special values of \textit{$p$-adic multiple polylogarithms}, among them the highly studied \textit{$p$-adic multiple zeta-values}. We show that all such $p$-adic numbers arise as \textit{Andr\'e $p$-adic periods} (as Ancona and \Fratila call them) in a natural and concrete way. In particular, we observe that the Frobenius-fixed de Rham paths of Besser \cite{Besser} and Vologodsky \cite{Vologodsky} are motivic after all (in contradiction of an oft-repeated adage), as long as one replaces motives over an arithmetic base by motives over one of its residue fields. 

\subsection{Notations and conventions}
\label{Not}
We work throughout with a number field $K$ with ring of integers $\Oo_K$, and an open integer scheme
\[
Z \subset \Spec \Oo_K.
\]
We also fix a closed point $\pP \in Z$, we denote the local field at $\pP$ by $K_\pP$, the ring of integers of $K_\pP$ by $\hat \Oo_\pP$, and the residue field at $\pP$ by $k$. Finally, we let $K_\pP^0$ denote the fraction field of the ring of Witt vectors of $k$.

We use an upper sharp ($\sharp$) to denote pullback of functions along a morphism of schemes or along a morphism of presheaves of sets, to be proved representable by schemes later.

\subsection*{Acknowledgements}
I wish to thank Giuseppe Ancona and Drago\c{s} \Fratila for helpful conversations and email exchanges, and Martin L\"udtke for helpful comments on an early draft of this manuscript. I wish to thank the referees for their careful reading and helpful comments.

%%%%%%%%%%%%%%%%%%%%%%%%%%%%%%%%%
\section{Review of Andr\'e periods}%%%%%%%%
%%%%%%%%%%%%%%%%%%%%%%%%%%%%%%%%

We begin by transporting the construction of Ancona-\Fratila to the setting of mixed Tate motives over a number field. 

\spar{A1}
We put ourselves in the situation of paragraph \ref{Not}. We then have the categories of mixed Tate motives $\MTM(Z)$, $\MTM(k)$ available, as well as a pullback functor (``specialization'')
\[
\tag{*}
s_\pP^* : \MTM(Z) \to \MTM(k).
\]
(See, for instance, Deligne-Goncharov \cite{DelGon} for a summary of the theory, which goes back to work of Levine and Voevodsky). Let $\dR^*$ denote the de Rham realization functor
\[
\dR^*: \MTM(Z) \to \Vect K.
\]
Let $\Cr^*$ denote the crystalline realization functor 
\[
\Cr^*:\MTM(k) \to \Vect K_\pP^0. 
\]
Let $\ze$ denote the lax-monoidal functor $\MTM(k) \to \Vect \QQ$
\[
\tag{**}
\ze(M) = \Hom\big(\QQ(0), M \big).
\]

\spar{extscal}
If $R'$ is an $R$-algebra, we let $e_{R'/R}$ denote \textit{extension of scalars} 
\[
\tag{*}
\Mod(R) \to \Mod(R')
\]
\[
V \mapsto R' \otimes_R V.
\]
Let $H(Z): \opnm{Alg}(K) \to \mathbf{Set}$ be the functor 
\[
H(Z)(R) = \Hom^\otimes \big( e_{R/\QQ} \circ \ze \circ s_\pP^*, e_{R/K} \circ \dR^*),
\]
In other words, $H(Z)$ is the functor of $\otimes$-compatible natural transformations, which we'll allow ourselves to write
\[
\tag{**}
H(Z) = \Hom^\otimes (\ze \circ s_\pP^*, \dR^*)
\]
without adding an underline. We view $H(Z)$ as a presheaf on the category of affine $K$-schemes. \footnote{As we'll see in section \ref{SecComp} (and as proved in a slightly different setting by Ancona-\Fratila), $H(Z)$ is a trivial torsor under a prounipotent group over $K$, hence in particular representable by a $K$-scheme.} Following Ancona-\Fratila \cite{AnconaFratila}, we define the \emph{ring of $p$-adic formal Andr\'e periods of mixed Tate motives over $Z$} to be the $K$-algebra
\[
\Oo H(Z) = \Hom \big(H(Z), \AA^1_K \big).
\]

\begin{sDefinition}
\label{B2}
Recall that $K_\pP$ denotes the local field of $K$ at $\pP$, a finite, purely ramified, extension of the absolutely unramified $p$-adic field $K_\pP^0$. In terms of the specialization functor $s_\pP^*$ (\ref{A1}*), the de Rham realization functor $\dR^*$, the functor $\ze$ (\ref{A1}**), and the extension of scalars functors $e_{?/??}$ (\ref{extscal}*), we define a $\otimes$-embedding
\[
\cber: e_{K_\pP/\QQ} \circ \ze \circ s_\pP^* 
\to 
e_{K_\pP/K} \circ \dR^*
\]
(hence a $K_\pP$-point of the functor $H(Z)$ (\ref{extscal}**)) as follows. Fixing $M \in \MTM(Z)$, we are to define a map of $K_\pP$-\textit{vector spaces}
\[
\cber(M):
K_\pP \otimes_\QQ \Hom \big( \QQ(0), s_\pP^* M \big)
\to
K_\pP \otimes_K \dR^*M.
\]
For this, fix a morphism $v: \QQ(0) \to s_\pP^* M$ in $\MTM(k)$. Then $\Cr^* v$ is a map of $K_\pP^0$-\textit{vector spaces}
\[
\tag{*}
\Cr^* v: K_\pP^0 \to \Cr^* s_\pP^* M.
\]
The Berthelot comparison isomorphism \cite[Theorem V.2.3.2]{berthelot2006cohomologie} gives rise to an isomorphism
\[
\tag{**}
\pP^\m{Ber}_M: K_\pP \otimes_{K_\pP^0} \Cr^* s_\pP^* M \xto{\sim} 
K_\pP \otimes_K \dR^*M
\]
(see Theorem 9.1, Notation 9.1 and Remark 9.3 of \cite{AnconaFratila} and the reference there). We define
\[
\cber(M)(v) := \big( \pP^\m{Ber}_M \circ \Cr^* v \big)(1).
\]
Viewed as a $K_\pP$-point of $H(Z)$, we refer to $\cber$ as the \emph{Berthelot point}.

Recall that we use an upper sharp to denote pullback of functions (\ref{Not}).
In terms of the $K_\pP$-point $\cber\in H(Z)(K_\pP)$, we define the \emph{Ancona-\Fratila period map} by
\[
\tag{$\dagger$}
\per^\m{AF} :=
\cber^\sharp : \Oo H(Z) \to K_\pP.
\]
We define the ring of \emph{Andr\'e periods of mixed Tate motives over $Z$} to be the image of $\per^\m{AF}$. 
\end{sDefinition}

\medskip
This is the definition given by Ancona-\Fratila \cite{AnconaFratila}, transported to our slightly different setting. We have not made any meaningful use of the very special (and simple) structure of the category of mixed Tate motives (it's a ``mixed Tate category''). As we'll see below, we can in fact construct $\cber$ without reference to $\MTM(k)$. 

\spar{A2}
\textbf{Variant (work over $\overline \Oo_\pP$).}
Alongside the above, we may also consider the following setup. We let $\overline \Oo$ be the ring of algebraic integers, $\pP$ a prime of $\overline \Oo$, $\overline \Oo_{\pP}$ the local ring at $\pP$, $k=\overline \Oo/\pP$ the residue field at $\pP$. If
\[
\overline Z = \Spec \overline \Oo_\pP,
\]
we may again consider the associated categories of mixed Tate motives, as well as the pullback map
\[
s_\pP^* : \MTM\big(\overline Z\big) \to \MTM(k).
\]
In this case, these are merely conjectural. Assuming expected properties, analogous constructions provide analogous objects ($\dR^*$, $\Cr^*$, $\ze$, $H\big(\overline Z\big)$, $\Oo H\big(\overline Z\big)$, $\cber$, $\per^\m{AF}$). 

\spar{A4}
\textbf{Variant (Restrict to principal subcategories).}
In another variation, we may fix a mixed Tate motive $M \in \MTM(Z)$ over $Z$, replace $\MTM(Z)$ by the Tannakian subcategory
\[
\langle M \rangle \subset \MTM(Z)
\]
generated by $M$, and replace $\MTM(k)$ by the Tannakian subcategory
\[
\langle M_p \rangle \subset \MTM(k)
\]
generated by
\[
M_\pP := s_\pP^* M.
\]
Analogous constructions provide analogous objects ($\dR^*|_{\langle M \rangle}$, $\Cr^*|_{\langle M \rangle}$, $\ze|_{\langle M \rangle}$, $H(\langle M \rangle)$, $\Oo H(\langle M \rangle)$, $\cber$, $\per^\m{AF}$); we speak of \emph{Andr\'e periods of $\langle M \rangle$}.\footnote{Ancona-\Fratila also discuss a potentially smaller ring of \textit{Andr\'e periods of $M$} \cite[Definition 5.5]{AnconaFratila}.}

\spar{A5}
\textbf{Variant (Principal subcategories over $\overline \Oo_\pP$).}
We combine variants \ref{A2} and \ref{A4}. Section 9 of Ancona-\Fratila \cite{AnconaFratila} takes place in this setting.

%%%%%%%%%%%%%%%%
\section{Classical $p$-adic periods of mixed Tate motives}
\label{SecRev}
%%%%%%%%%%%%%%

We recast the classical construction in the language of joint work with D. Corwin \cite{DCCorwinPer}. In the present setting, this is merely a terminological variation on the construction of \cite[\S4]{mtmue} which is better suited to generalization to bigger categories of motives. 

\spar{A3}
In the presence of a Tannakian category $T$ and a fiber functor $\om$, we denote the associated Tannakian Galois group by $G_\om(T)$. Instead of writing $G_\om\big(\MTM(Z) \big)$ and $G_\om \big(\MTM(k) \big)$ we write $G_\om(Z)$ and $G_\om(k)$. The full subcategory $\TM(Z) \subset \MTM(Z)$ of semisimple objects induces a surjection of Tannakian fundamental groups, hence a short exact sequence of group schemes over $K$
\[
\tag{*}
1 \to U_\dR(Z) \to G_\dR(Z) \to \GG_{m, K} \to 1
\]
with $U_\dR(Z)$ prounipotent. 

Categories of mixed Tate motives have canonical $\QQ$-rational fiber functors $\om_\m{can}$ given (in each setting) by
\[
\om_\m{can}(M) = 
\bigoplus_i \Hom \big( \QQ(i), \gr^W_i M \big).
\]
The associated exact sequence $(*)_\m{can}$ of group schemes over $\QQ$ is canonically split. The base extension $(\om_\m{can})_K$ may undangerously be identified with the \emph{graded de Rham fiber functor} $\dR^* \circ \gr^W$.

\spar{A6}
Adding Hodge filtrations, the de Rham fiber functor factors through a symmetric monoidal functor
\[
\dR^*_\m{Fil}: \MTM(Z) \to \opnm{Mod}^\m{Fil} K
\]
to the category of filtered vector spaces. In the language of \cite{DCCorwinPer}, $\dR^*_\m{Fil}$ is a \emph{Hodge filtered fiber functor} on a \emph{weight filtered Tannakian category}. Consequently, there's a unique\footnote{The uniqueness here follows from the fact that in the present setting, in the notation of loc. cit. $F^0 U_\dR(Z) = \{1\}$.} 
Tannakian path
\[
\tag{*}
\pP^H: \dR^* \circ \gr^W \to \dR^*
\]
(the ``arithmetic Hodge path'') preserving Hodge filtrations and lifting the identity endomorphism of $ \dR^*_\m{Fil} \circ \gr^W$. In concrete terms, for each $M \in \MTM(Z)$,
\[
\pP^H_M: \gr^W M_\dR \to M_\dR
\]
is a splitting of the weight filtration which preserves Hodge filtrations. 

\spar{B3}
 Similarly, the crystalline fiber functor factors through a symmetric monoidal functor
\[
\Cr^*_\phi: \MTM(k) \to \opnm{Mod}^\phi K_\pP^0
\]
to the category of $\phi$-modules (vector spaces equipped with a Frobenius-linear automorphism --- in reference to the unique lifting of Frobenius to $K_\pP^0$). In the language of loc. cit., $\Cr^*_\phi$ is a \emph{Frobenius-equivariant fiber functor}. Consequently, there's a unique Tannakian path
\[
\tag{*}
\pP^\m{Cr}: \Cr^* \circ \gr^W \to \Cr^*
\]
(the ``arithmetic crystalline path'') preserving crystalline Frobenii and lifting the identity on $\Cr^* \circ \gr^W$. Transporting $\pP^\m{Cr}$ along the specialization functor $s^*_\pP$ and using the Berthelot comparison isomorphism, we obtain a Tannakian path
\[
\tag{**}
e_{K_\pP/K} \circ \dR^* \circ \gr^W 
\to 
e_{K_\pP/K} \circ \dR^*
\]
which we will denote (somewhat inaccurately) by $s^\pP_* \pP^\m{Cr}_{K_\pP}$. 
If $M \in \MTM(Z)$, then
\[
(s^\pP_*\pP^\m{Cr}_{K_\pP})_M: \gr^W M_\dR \to M_\dR
\]
is a splitting of the weight filtration which preserves Frobenii. 

\begin{sDefinition}
\label{B4}
In terms of the arithmetic Hodge path $\pP^H$ (\ref{A6}*), the arithmetic crystalline path $\pP^\m{Cr}$ (\ref{B3}), and the specialization functor $s_\pP^*$ (\ref{A1}*), we define the \emph{classical $p$-adic period loop}
\[
\uU: \Spec K_\pP \to U_\dR(Z) 
\]
by
\[
\uU =  s^\pP_* \pP^\m{Cr}_{K_\pP} \circ (\pP^H_{K_\pP})\inv,
\]
we define the \emph{classical $p$-adic period map} by 
\[
\tag{*}
\per^\m{Cl} := \uU^\sharp: \Oo U_\dR(Z) \to K_\pP,
\] 
and we define the \emph{ring of classical $p$-adic periods of mixed Tate motives over $Z$} to be the image of $\per^\m{Cl}$. 
\end{sDefinition}

This is just a reformulation of the notion considered by Yamashita \cite{YamashitaBounds}, Chatzistamatiou-\"Unver \cite{chatzistamatiou2011p}, and others.

\begin{sRemark}
\label{RM55}
If $E \in \MTM(Z)$ is a mixed Tate motive, $e \in E_\dR$ is a vector, and $\ep \in E_\dR^\lor$ is a linear functional, then the data $[E, e, \ep]$ (a ``Tannakian matrix entry'') gives rise to a function $G_\dR(Z) \to \AA^1_K$. Let $R$ be a $K$-algebra and consider an $R$-valued point $\ga \in G_\dR(Z)(R)$. Then $\ga_E$ is an invertible $R$-linear endomorphism of $R \otimes_K E_\dR$, and $[E,e,\ep](\ga)$ is defined to be the image of $1 \otimes e$ under the composition
\[
R \otimes_K E_\dR \xto{\ga_E} R \otimes_K E_\dR \xto{\ep} R.
\]
In the setting of mixed Tate motives, the \emph{$p$-adic periods of $E$} are, by definition, the values $[E, e, \ep](\uU) \in K_\pP$ for varying $e$ and $\ep$. In particular, if $E$ is semisimple, then $[E, e, \ep]$ vanishes on $U_\dR(Z)$, so $E$ has no nonzero periods.  
\end{sRemark}

%%%%%%%%%%%%%%%%%%%
\section{Comparison}
\label{SecComp}
%%%%%%%%%%%%%%%

We continue with our fixed number field $K$ and the open integer scheme $Z$ (\ref{Not}). As indicated in the introduction, in terms of the presheaf $H(Z)$ (\ref{extscal}**), the Ancona-\Fratila period map $\per^\m{AF}$ (\ref{B2}$\dagger$), the prounipotent group $U_\dR(Z)$ (\ref{A3}*), and the classical period map $\per^\m{Cl}$ (\ref{B4}*), our main observation is the following theorem.

\begin{Theorem*}[\ref{A7}]
There exists an isomorphism of $K$-algebras of ``formal periods'' commuting with the two period maps, as in the following diagram:
\[
\xymatrix
@ R = 2pt
{
\Oo H(Z) \ar[dd]_\sim \ar[dr]^-{\per^\m{AF}}
\\
& K_\pP \;.
\\
\Oo U_\dR(Z) \ar[ur]_-{\per^\m{Cl}}
}
\]
\end{Theorem*}

We will take a somewhat circuitous path to the proof of theorem \ref{A7} in order to have portions of the construction that generalize readily beyond the mixed Tate setting. In outline, we will show first that $H(Z)$ becomes a canonically trivial torsor under $U_\dR(Z)$ after base-change to $K_\pP$. It then follows that $H(Z)$ is a (necessarily trivial) torsor under $U_\dR(Z)$ already over $K$. Incidentally, this will show that $H(Z)$ is representable by an affine $K$-scheme.\footnote{Ancona-\Fratila show (in a more general setting) that for any $M \in \MTM(Z)$, $H(\langle M \rangle)$ is representable by a finite type affine $K$-scheme.} Finally, we'll construct a $K$-point of $H(Z)$ whose orbit map interchanges the two period loops. All of this is merely a game of Tannakian sudoku, and essentially boils down to the following consequence of Quillen's computation of K-groups of finite fields.

\begin{sRemark}
Let $\TM(Z) \subset \MTM(Z)$ be the full subcategory of semisimple objects. The composition
\[
\TM(Z) \subset \MTM(Z) \to \MTM(k)
\]
is an equivalence of categories, both canonically equivalent to the category of finite dimensional representations of $\Gm$ over $\QQ$. 
\end{sRemark}

We will use the following general Tannakian result, used also by Ancona and \Fratila.

\begin{sProposition}
\label{A10}
Let $F$ be a field, let $G$ be an affine group-scheme over $F$, and let $H < G$ be a closed subgroup. Let
\[
\om: \Rep_F G \to \Vect F
\]
be the canonical fiber functor and let
\[
\Inv_H : \Rep_F G \to \Vect F
\]
be the functor 
\[
V \mapsto V^H.
\]
Then the evident map
\[
G/H \to \Hom^\otimes(\Inv_H, \om) 
\] 
factors through an isomorphism
\[
\Spec \Oo(G)^H = \Hom^\otimes(\Inv_H, \om).
\]
\end{sProposition}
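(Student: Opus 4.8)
The plan is to identify $\Hom^\otimes(\Inv_H, \om)$ with $G/H$ directly via Tannakian reconstruction. First I would note that $\om: \Rep_F G \to \Vect F$ is a fiber functor with $\Aut^\otimes(\om) = G$, so that for any $F$-algebra $R$ the $R$-points of $G$ are the $\otimes$-automorphisms of $\om_R$; more generally, for a closed subgroup $H < G$, the functor $\Inv_H = (-)^H$ lands in $\Vect F$ precisely because taking $H$-invariants is an exact $\otimes$-functor (the category $\Rep_F G$ being $\otimes$-generated by finite-dimensional representations, and $(-)^H$ commuting with $\otimes$ since $H$ acts diagonally). The natural map $G/H \to \Hom^\otimes(\Inv_H, \om)$ sends a coset $gH$ to the natural transformation whose component at $V$ is the composite $V^H \hookrightarrow V \xrightarrow{g} V$; this is well-defined because changing $g$ within $gH$ alters the inclusion $V^H \hookrightarrow V$ by an element of $H$ acting trivially on $V^H$, and it is $\otimes$-compatible since $g$ acts diagonally on tensor products and the inclusions $V^H \otimes W^H \hookrightarrow (V\otimes W)^H$ are compatible with the structure.

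Next I would show this map is an isomorphism of functors on $F$-algebras. For surjectivity onto $\Hom^\otimes(\Inv_H, \om)$: given an $R$-point, i.e. a $\otimes$-natural transformation $\eta: \Inv_H \otimes R \Rightarrow \om \otimes R$, I would extend it to an element of $G(R)$ and then check it descends to $G/H(R)$. The extension step is where one uses that $\om$ is in fact the forgetful functor and that $\Rep_F G$ carries the regular representation $\Oo(G)$ as an ind-object: the component of $\eta$ at $\Oo(G)$, restricted to $\Oo(G)^H = \Oo(G/H)$, produces (by the usual Tannakian yoga, e.g. as in Deligne--Milne) a point of $G/H$, and $\otimes$-compatibility pins down that this point maps back to $\eta$. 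Injectivity is the observation that two cosets inducing the same natural transformation must agree after testing on $\Oo(G)$ (or any faithful representation together with its $H$-fixed vectors), since $G/H$ embeds into such an affine scheme. Once $G/H \xrightarrow{\sim} \Hom^\otimes(\Inv_H, \om)$ as functors, the identification $\Oo(G/H) = \Oo(G)^H$ (which holds by definition of the quotient fppf sheaf, $\Oo(G)^H$ being the subring of functions constant on $H$-cosets) gives $\Spec \Oo(G)^H = \Hom^\otimes(\Inv_H, \om)$.

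The main obstacle I expect is the surjectivity step: one must argue that an abstract $\otimes$-natural transformation $\Inv_H \Rightarrow \om$ — not a priori an isomorphism, and defined only on the subfunctor of invariants rather than all of $\om$ — actually arises from a point of $G$. The clean way is to reduce to the classical Tannakian statement that $\Aut^\otimes(\om) = G$ by showing that $\eta$, evaluated on the ind-object $\Oo(G)$ with its two commuting $G$-actions, factors through an $R$-algebra homomorphism $\Oo(G) \to R$ landing in the image of $\Oo(G/H)$; compatibility of $\eta$ with the coalgebra/comodule structure maps (all of which are morphisms in $\Rep_F G$ sending invariants to invariants) forces this homomorphism to be a point of $G/H$. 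A careful writer would either cite the analogous statement in Ancona--\Fratila \cite{AnconaFratila} or in \cite{DCCorwinPer}, or spell out the comodule bookkeeping; I would do the latter only as far as needed to make the factorization through $\Oo(G/H)$ evident, leaving the remaining verifications to the reader as routine Tannakian formalism.
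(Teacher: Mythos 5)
Your argument conflates the quotient sheaf $G/H$ with its affinization $\Spec\Oo(G)^H$, and that distinction is exactly what the phrase ``factors through'' in the statement is there to record. The map $G/H \to \Hom^\otimes(\Inv_H,\om)$ is \emph{not} an isomorphism of functors in general. For a concrete failure take $F$ of characteristic zero, $G=\GL_2$, and $H=B$ a Borel. A $B$-fixed vector must be torus-fixed (weight zero) and fixed by the unipotent radical (a highest-weight vector), hence lies in the trivial isotypic summand; so $V^B=V^G$ for every $V$, $\Inv_B=\Inv_G$, and $\Hom^\otimes(\Inv_B,\om)=\Hom^\otimes(\Inv_G,\om)$ consists of the single canonical inclusion of invariants, i.e.\ is $\Spec F$. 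Meanwhile $G/B\cong\PP^1$ has many $F$-points, so the evident map is far from injective. Correspondingly your injectivity claim --- ``since $G/H$ embeds into such an affine scheme'' --- is false: $\PP^1$ does not embed in an affine scheme, and in general $G/H$ need not be affine or even quasi-affine. Your surjectivity step carries a matching slip: evaluating $\eta$ on $\Oo(G)$ and composing with the counit produces a ring homomorphism $\Oo(G)^H\to R$, i.e.\ a point of $\Spec\Oo(G)^H$, \emph{not} of $G/H$ as written; these differ precisely when the affinization map $G/H\to\Spec\Oo(G)^H$ fails to be an isomorphism, which is the situation the proposition is phrased to accommodate.

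What survives is the core idea: work throughout with $\Spec\Oo(G)^H$ rather than $G/H$, and show directly that the evident map $\Spec\Oo(G)^H\to\Hom^\otimes(\Inv_H,\om)$ and the map $\Hom^\otimes(\Inv_H,\om)\to\Spec\Oo(G)^H$ given by evaluation on the regular representation are mutually inverse. That is the content of the result of Haines that the paper's proof simply cites, remarking only that the argument goes through on $R$-points for an arbitrary $F$-algebra $R$ rather than just on $F$-points. So the Tannakian mechanism you reach for is the right one and matches the reference, but routing it through $G/H$ introduces a false intermediate claim and a broken injectivity step, both of which need to be removed.
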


\begin{proof}
The proof of Proposition 2.3 of Haines \cite{hainesdrinfeld} applies \textit{mutatis mutandis} with `point' replaced by `$R$-point for $R$ an arbitrary $F$-algebra' throughout.
\end{proof}

\spar{A8}
There's an evident action the Tannakian Galois group $G_\dR(Z)$ (\ref{A3}*) on $H(Z)$. The Berthelot isomorphism (\ref{B2}**) defines a $\otimes$-compatible natural transformation
\[
\pP^\m{Ber}: e_{K_\pP/K_\pP^0} \circ \Cr^* \circ s_\pP^* \to e_{K_\pP/K} \circ \dR^*,
\]
the \emph{Berthelot path}. We note that the source of the Berthelot path is not exact, hence not a fiber functor. Denote the \emph{crystalline fundamental group} of $\MTM(Z)$ by
\[
G_{\Cr}(Z) = \Aut^\otimes (\Cr^* \circ s_\pP^*).
\]
The Berthelot path gives rise, on the one hand, to a map of affine $K_\pP$-groups
\[
\tag{**}
\si_\m{Cr}:G_{\Cr}(Z)_{K_\pP} \to G_\dR(Z)_{K_\pP},
\]
and on the other hand, to an orbit map 
\begin{align*}
o(\cber): G_\dR(Z)_{K_\pP} &\to H(Z)_{K_\pP}
\\
\ga &\mapsto \ga \cber
\end{align*}
(see (\ref{B2}) for the Berthelot point $\cber$). Evidently, $G_\m{Cr}(Z)_{K_\pP}$ is contained in the stabilizer, so we obtain a map
\[
\tag{*}
G_\dR(Z)_{K_\pP}/G_\m{Cr}(Z)_{K_\pP} \to H(Z)_{K_\pP}.
\]

\begin{sProposition}
\label{A9}
The map \ref{A8}(*) is an isomorphism. 
\end{sProposition}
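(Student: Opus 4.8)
The plan is to deduce Proposition~\ref{A9} from the general Tannakian Proposition~\ref{A10}, applied with $F=K_\pP$, with $G=G_\dR(Z)_{K_\pP}$ — so that $\Rep_{K_\pP}G$ is $\MTM(Z)$ base-changed to $K_\pP$ with canonical fiber functor $\om=e_{K_\pP/K}\circ\dR^*$ — and with $H$ the image of $\si_\m{Cr}$. Concretely, I must establish three things: (i) $\si_\m{Cr}$ is a closed immersion, so that $H\le G$ is a closed subgroup; (ii) there is a $\otimes$-compatible natural isomorphism between the base change $e_{K_\pP/\QQ}\circ\ze\circ s_\pP^*$ and the invariants functor $\Inv_H$; and (iii) the identification of $H(Z)_{K_\pP}=\Hom^\otimes(\Inv_H,\om)$ with $\Spec\Oo(G)^H$ furnished by Proposition~\ref{A10} coincides with the map~\ref{A8}(*).

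The key input is the observation recorded in the sketch of the proof of Theorem~\ref{A7}: since $\MTM(k)$ is semisimple (by Quillen's computation of the rational $K$-theory of finite fields), an exact tensor functor $\MTM(Z)\to\MTM(k)$ fixing the Tate objects must split the weight filtration, so $s_\pP^*$ is canonically isomorphic, as a tensor functor, to $\gr^W$ followed by the equivalence $\TM(Z)\xrightarrow{\sim}\MTM(k)$. Two consequences: $s_\pP^*$ is essentially surjective, so that $G_\m{Cr}(Z)=\Aut^\otimes(\Cr^*\circ s_\pP^*)=\Aut^\otimes(\Cr^*)$ is the crystalline Galois group of the semisimple (hence purely graded) category $\MTM(k)\simeq\Rep_\QQ\Gm$ — a $K_\pP^0$-form of $\Gm$ acting on each $\Cr^* s_\pP^* M$ through the weight grading; and $\ze\circ s_\pP^*$ is the functor $M\mapsto\Hom(\QQ(0),\gr^W_0 M)$, the weight-$0$ multiplicity space, which in particular is \emph{not} a fiber functor.

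For (i): via the Berthelot comparison isomorphism $\pP^\m{Ber}_M$, every representation $\Cr^* s_\pP^* M$ of $G_\m{Cr}(Z)_{K_\pP}$ is, after base change to $K_\pP$, the restriction along $\si_\m{Cr}$ of the representation $\om(M)=\dR^* M\otimes_K K_\pP$ of $G$; as $M$ varies these exhaust $\Rep G_\m{Cr}(Z)$ (essential surjectivity of $s_\pP^*$ together with Tannakian reconstruction), so the Hopf-algebra map dual to $\si_\m{Cr}$ is surjective, i.e. $\si_\m{Cr}$ is a closed immersion. For (ii): because $\Cr^*$ is a tensor functor on the weight-graded semisimple category $\MTM(k)$, it carries the weight grading to a grading of $\Cr^* s_\pP^* M$, and the $\Gm=G_\m{Cr}(k)$-invariants are the degree-$0$ part $\Cr^*(\gr^W_0 s_\pP^* M)=\Hom(\QQ(0),\gr^W_0 M)\otimes_\QQ K_\pP^0$; conjugating by $\pP^\m{Ber}_M$ yields a natural isomorphism $\Hom(\QQ(0),\gr^W_0 M)\otimes_\QQ K_\pP\;\liso\;\Inv_H(M)\subset\om(M)$, readily seen to be $\otimes$-compatible (both functors being lax-monoidal). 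Unwinding the definition in~\ref{B2}, one sees that under this isomorphism the Berthelot point $\cber$ corresponds exactly to the tautological inclusion $\Inv_H\hookrightarrow\om$.

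Granting (i) and (ii), we obtain
\[
H(Z)_{K_\pP}\;=\;\Hom^\otimes_{K_\pP}\!\big(e_{K_\pP/\QQ}\circ\ze\circ s_\pP^*,\,\om\big)\;\cong\;\Hom^\otimes_{K_\pP}(\Inv_H,\om),
\]
which by Proposition~\ref{A10} is canonically $\Spec\Oo(G)^H=G_\dR(Z)_{K_\pP}/G_\m{Cr}(Z)_{K_\pP}$. Since the isomorphism of Proposition~\ref{A10} sends a coset $\ga H$ to the transformation $\Inv_H\hookrightarrow\om\xrightarrow{\ga}\om$, whereas~\ref{A8}(*) sends $\ga\,G_\m{Cr}(Z)_{K_\pP}$ to $\ga\cber$, the last sentence of (ii) shows that the composite isomorphism is precisely~\ref{A8}(*). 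The step demanding genuine care is (ii): one must exploit the very special structure of $\MTM(k)$ to see that the non-exact assignment $\ze\circ s_\pP^*$ — which is not a fiber functor — base-changes to exactly the $H$-invariants sitting inside the genuine fiber functor $\om$; (i) is then routine, and (iii) a diagram chase.
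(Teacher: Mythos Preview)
Your argument is correct and follows exactly the route the paper takes: identify $e_{K_\pP/\QQ}\circ\ze\circ s_\pP^*$ with $\Inv_{G_\m{Cr}(Z)_{K_\pP}}$ under the Tannakian equivalence $\MTM(Z)_{K_\pP}\simeq\Rep_{K_\pP}G_\dR(Z)_{K_\pP}$, and then invoke Proposition~\ref{A10}. The paper compresses your steps (i)--(iii) into a single sentence, but you have merely spelled out what that sentence asserts.
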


\begin{proof}
Under the equivalence
\[
\MTM(Z)_{K_\pP} \simeq \Rep G_\dR(Z)_{K_\pP},
\]
the functor $\ze \circ s_\pP^*$ (\ref{A1}) corresponds to $\Inv_{G_\m{Cr}(Z)_{K_\pP}}$. So this follows from Proposition \ref{A10}.
\end{proof}

\begin{sProposition}
\label{A11}
The map $U_\dR(Z)_{K_\pP} \to H(Z)_{K_\pP}$ induced by the orbit map $o(\cber)$ (\ref{A8}) is an isomorphism. 
\end{sProposition}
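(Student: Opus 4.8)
The plan is to read Proposition \ref{A11} off Proposition \ref{A9}, which already identifies $H(Z)_{K_\pP}$ with the quotient $G_\dR(Z)_{K_\pP}/G_\m{Cr}(Z)_{K_\pP}$ in a way that turns $o(\cber)$ into the canonical projection. Under this identification the assertion becomes: the composite
\[
U_\dR(Z)_{K_\pP} \inj G_\dR(Z)_{K_\pP} \surj G_\dR(Z)_{K_\pP}/G_\m{Cr}(Z)_{K_\pP}
\]
is an isomorphism of $K_\pP$-schemes. First I would isolate the elementary group-scheme fact that does the work: if $1 \to U \to G \to T \to 1$ is exact with $U$ normal and $s\colon S \inj G$ is a closed subgroup for which the composite $S \to G \to T$ is an isomorphism, then $(u,s)\mapsto u\cdot s$ is an isomorphism of schemes $U \times S \iso G$, so that $U \iso G/S$. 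Applied to the exact sequence \ref{A3}(*) base-changed to $K_\pP$, with $S = \si_\m{Cr}\big(G_\m{Cr}(Z)_{K_\pP}\big)$, this reduces everything to a single claim:

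\medskip\noindent\textbf{Claim.} The composite $G_\m{Cr}(Z)_{K_\pP} \xrightarrow{\si_\m{Cr}} G_\dR(Z)_{K_\pP} \to \GG_{m,K_\pP}$, where the second map is the weight quotient of \ref{A3}(*), is an isomorphism (in particular $\si_\m{Cr}$ is a closed immersion).

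\medskip
To prove the Claim I would restrict everything to the semisimple subcategory $\TM(Z) \subset \MTM(Z)$. By construction the weight quotient $G_\dR(Z) \to \GG_{m,K}$ is the homomorphism induced by this inclusion, with $\GG_{m,K} = \Aut^\otimes(\dR^*|_{\TM(Z)})$; and $\si_\m{Cr}$, being induced by the Berthelot path $\pP^\m{Ber}$, is compatible with restriction to $\TM(Z)$. Here is where Quillen's computation of the $K$-theory of finite fields enters essentially: because $\MTM(k)$ is semisimple and the composite $\TM(Z) \inj \MTM(Z) \xrightarrow{s_\pP^*} \MTM(k)$ is an equivalence (both canonically $\Rep_\QQ\Gm$), the crystalline realization $\Cr^*\circ s_\pP^*$ restricted to $\TM(Z)$ is canonically the $K_\pP^0$-extension of $\om_\m{can}|_{\TM(Z)}$; and the Berthelot comparison, being the canonical isomorphism on each Tate twist $\QQ(n)$, carries its $K_\pP$-extension onto $\dR^*|_{\TM(Z)} = (\om_\m{can})_K|_{\TM(Z)}$. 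Feeding these identifications into the square relating the restriction maps, which commutes by naturality of $\pP^\m{Ber}$, the composite of the Claim becomes the identity of $\GG_{m,K_\pP}$, and in particular an isomorphism. Granting the Claim, the elementary fact above gives $G_\dR(Z)_{K_\pP} = U_\dR(Z)_{K_\pP} \rtimes \si_\m{Cr}\big(G_\m{Cr}(Z)_{K_\pP}\big)$, hence $U_\dR(Z)_{K_\pP}\iso G_\dR(Z)_{K_\pP}/G_\m{Cr}(Z)_{K_\pP}$; tracing this back through Proposition \ref{A9}, the isomorphism so obtained is exactly the map induced by $o(\cber)$, which completes the proof.

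\medskip
The step I expect to be the main obstacle is the Claim, i.e.\ controlling $G_\m{Cr}(Z)_{K_\pP}$ and $\si_\m{Cr}$ precisely enough — in essence, showing that the crystalline realization, \emph{after} specializing to the residue field, sees only the semisimplification and so collapses $G_\m{Cr}(Z)_{K_\pP}$ onto the weight torus. Two points need care: the compatibility of the Berthelot comparison with the canonical generators on Tate twists (standard, but it wants a precise reference); and the bookkeeping needed to say that $\si_\m{Cr}$ and the weight quotient are genuinely compatible with restriction to $\TM(Z)$, given that the source of $\pP^\m{Ber}$ is only lax — and not a fiber functor — as flagged in \ref{A8}. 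Since Proposition \ref{A9}, where this lax source is already handled, is available, the latter should amount to routine diagram-chasing.
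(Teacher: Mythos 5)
Your argument is correct and takes essentially the same route as the paper. The paper's proof is terser: it asserts directly that $G_\m{Cr}(Z)=\GG_{m,K_\pP^0}$ and that $\si_\m{Cr}$ splits the weight quotient $G_\dR(Z)_{K_\pP}\to\GG_{m,K_\pP}$, then cites Proposition \ref{A9}; your Claim and the subsequent group-scheme observation about a section of a quotient giving $U\iso G/S$ unpack exactly those two assertions. The extra diligence you flag --- compatibility of $\si_\m{Cr}$ and of the Berthelot path with restriction to $\TM(Z)$, and the identification of $\Cr^*\circ s_\pP^*$ on the semisimple part with the extension of scalars of $\om_\m{can}$ via Quillen --- is the content the paper leaves implicit in its one-line ``$G_\m{Cr}(Z) = \GG_{m, K_\pP^0}$'' and ``$\si_\m{Cr}$ splits the canonical surjection.'' No gap.
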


\begin{proof}
We have $G_\m{Cr}(Z) = \GG_{m, K_\pP^0}$. Moreover, the map $\si_\m{Cr}$ (\ref{A8}**) splits the canonical surjection
\[
G_\dR(Z)_{K_\pP} \to \GG_{m, K_\pP}.
\]
So this follows from Proposition \ref{A9}.
\end{proof}

\begin{sProposition}
\label{A12}
The action of the Tannakian Galois group $G_\dR(Z)$ on the presheaf $H(Z)$ (defined over the number field $K$) makes $H(Z)$ into a trivial $U_\dR(Z)$-torsor. In particular, $H(Z)$ is representable by an affine $K$-scheme.  
\end{sProposition}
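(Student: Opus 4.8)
The plan is to descend the isomorphism of Proposition \ref{A11} from $K_\pP$ down to the base field $K$. The key observation is that $H(Z)$ is \emph{a priori} a presheaf on $K$-algebras carrying an action of the $K$-group $G_\dR(Z)$, and we already know (Proposition \ref{A11}) that after the faithfully flat base change $K \to K_\pP$ the subfunctor $U_\dR(Z)_{K_\pP}$ acts simply transitively with $\cber$ as a base point, i.e. $o(\cber)_{K_\pP}: U_\dR(Z)_{K_\pP} \to H(Z)_{K_\pP}$ is an isomorphism. The first thing I would check is that the orbit map $o(\cber)$ itself is only defined over $K_\pP$ because $\cber$ is only a $K_\pP$-point; so I cannot literally descend the \emph{trivialization}. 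Instead I would argue that $H(Z)$, as a functor on $K$-algebras equipped with its $U_\dR(Z)$-action, is fppf-locally isomorphic to $U_\dR(Z)$ acting on itself by translation, hence is a $U_\dR(Z)$-torsor for the fppf topology on $\Spec K$.

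The next step is to invoke the vanishing of the relevant torsor cohomology. Since $K$ is a field and $U_\dR(Z)$ is prounipotent — indeed an inverse limit of unipotent groups, each of which is an iterated extension of copies of $\Ga$ — the pointed set $H^1_{\mathrm{fppf}}(\Spec K, U_\dR(Z))$ is trivial: $H^1(K,\Ga) = 0$ by additive Hilbert 90, this propagates through the finite-level extensions by the usual long exact sequence of pointed sets, and one passes to the limit (the transition maps are surjective on the unipotent radicals, so there is no $\varprojlim^1$ obstruction for the relevant torsors — or one simply notes that a torsor under a prounipotent group over a field is a limit of torsors under unipotent groups, each split). Therefore any fppf $U_\dR(Z)$-torsor over $K$ is trivial, so $H(Z)$ has a $K$-point and $o$ of that point gives an isomorphism $U_\dR(Z) \xrightarrow{\sim} H(Z)$ of $U_\dR(Z)$-torsors over $K$. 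In particular $H(Z)$ is representable by $\Spec \Oo(U_\dR(Z))$, an affine $K$-scheme, and the ``in particular'' clause follows.

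The main obstacle — or at least the point requiring the most care — is establishing that $H(Z)$ really is an fppf torsor under $U_\dR(Z)$ over $K$, rather than merely a functor that becomes a torsor after one fixed base change. Concretely I need: (i) $H(Z)$ is a sheaf for the fppf topology on $\Spec K$ (clear, since $\Hom^\otimes$ of fiber-functor-like gadgets glues along faithfully flat descent); (ii) the $U_\dR(Z)$-action is free with the expected quotient, which over $K_\pP$ is Proposition \ref{A11}, and freeness/transitivity are detected after faithfully flat base change; and (iii) $H(Z)$ is nonempty fppf-locally on $\Spec K$ — but this is immediate since it is nonempty already over the single faithfully flat cover $\Spec K_\pP \to \Spec K$, namely the point $\cber$. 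Combining (i)–(iii) shows $H(Z)$ is an fppf $U_\dR(Z)$-torsor; then the cohomological vanishing above finishes the argument. One could alternatively phrase the whole thing Tannakianly — $H(Z) = \Hom^\otimes(\ze \circ s_\pP^*, \dR^*)$ with both functors valued in $\Vect K$ after the identification $\ze \circ s_\pP^* \simeq \om_{\mathrm{can}}$ coming from Quillen's K-theory computation, so that $H(Z)$ is literally a $\Hom^\otimes$ between two fiber functors on $\MTM(Z)$ and hence a torsor under $G_\dR(Z)$, which after passing to the kernel of the weight grading becomes a $U_\dR(Z)$-torsor — but the fppf-descent formulation makes the triviality over $K$ most transparent.
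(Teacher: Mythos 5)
Your proof is correct and takes essentially the same approach as the paper. The paper's own argument is more terse — it observes that the shear map $(u,h)\mapsto(uh,h)$ is fpqc-locally an isomorphism by Proposition \ref{A11}, hence an isomorphism, so $H(Z)$ is an fpqc torsor under a prounipotent group over a characteristic-zero field and therefore trivial — but the substance is identical to yours: descend the torsor structure from $K_\pP$ via faithfully flat descent, then conclude triviality from the general fact that torsors under (pro)unipotent groups over a field of characteristic zero split; your explicit dévissage to additive Hilbert 90 and the remark about sheafiness and fppf-local nonemptiness just spell out what the paper leaves implicit.
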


\begin{proof}
By Proposition \ref{A11}, the map (action, projection)
\[
U_\dR(Z) \times H(Z) \to H(Z) \times H(Z)
\]
is fpqc-locally an isomorphism, hence itself an isomorphism. This means that $H(Z)$ is an fpqc torsor under a prounipotent group over a field of characteristic zero, hence, a posteriori, a trivial torsor. 
\end{proof}

\spar{A13}
Recall from paragraph \ref{A3} that categories of mixed Tate motives admit canonical $\QQ$-rational fiber functors. We may undangerously regard the triangle
\[
\xymatrix{
\MTM(Z) \ar[r]^-{s_\pP^*} \ar[dr]_-{\om_\m{can}} 
& \MTM(k) \ar[d]^-{\om_\m{can}}
\\ & \Vect \QQ
}
\]
as commuting on the nose. There's an evident injective $\otimes$-compatible natural transformation 
\[
\tag{*}
\iota: \ze \subset \om_\m{can}
\]
(see \ref{A1} for $\ze$). 

\begin{Definition*} 
We define the \emph{de Rham point} $\cC_\dR: \Spec K \to  H(Z)$ by composing $\iota$ with the arithmetic Hodge path
\[
\pP^H: e_{K/\QQ} \circ \om_\m{can} = 
\dR^* \circ \gr^W \to \om_\dR
\]
of paragraph \ref{A6}. 
\end{Definition*}

\begin{proof}[Proof of Theorem \ref{A7}]
If instead we compose $\iota$ (\ref{A13}*) with the arithmetic crystalline path 
\[
s^\pP_*\pP^\m{Cr}_{K_\pP}: e_{K_\pP / \QQ} \circ \om_\m{can}
= e_{K_\pP/K } \circ \dR^* \circ \gr^W
\to 
e_{K_\pP/K } \circ \dR^*
\]
(transported to the de Rham fiber functor on $\MTM(Z)$ as in paragraph \ref{B3}), we find that
\[
s_*^\pP\pP^\m{Cr}_{K_\pP} \circ \iota = \cber
\]
(\ref{B2}). To see this, fix $M \in \MTM(Z)$, base change everything to $K_\pP$ and drop $K_\pP$ from the notation, and note that the diagram
\[
\xymatrix
{
& \om_\m{can}(M) 
\ar@{=}[r] \ar@{=}[d]
& \gr^W M_\dR 
\ar@{=}[d] \ar[r]^-{s^\pP_* \pP^\m{Cr}}
& M_\dR
\\
\Hom(\QQ(0), \gr^W_0 M_\pP)
\ar[r] \ar@{=}[d]
& \om_\can(M_\pP) \ar@{=}[r]
& \gr^W M_\m{Cr} \ar@{=}[d] \ar[r]^-{\pP^\m{Cr}}
& M_\m{Cr} \ar[u]_-{\pP^\m{Ber}}
\\
\Hom(\QQ(0), M_\pP) \ar[rr]
&& M_\m{Cr} \ar@{=}[ur]
}
\]
commutes. It follows that the classical $p$-adic period loop $\uU \in U_\dR(Z)(K_\pP)$ (\ref{B4}), the $K$-point $\cC_\dR \in H(Z)(K)$ (\ref{A13}), and the $K_\pP$-point $\cber \in H(Z)(K_\pP)$ (\ref{B2}) satisfy
\[
\uU \cC_\dR = \cber. 
\]
Consequently, the orbit map
\[
o(\cC_\dR): U_\dR(Z) \to H(Z)
\]
(defined over the number field $K$), fits in a commuting triangle
\[
\xymatrix
{
& H(Z)
\\
\Spec K_\pP \ar[ur]^-{\cber} \ar[dr]_-{\uU}
\\
& U_\dR(Z) \ar[uu]_-{o(\cC_\dR)}.
}
\]
Finally, by Proposition \ref{A12}, $o(\cC_\dR)$ is an isomorphism. 
\end{proof}

%%%%%%%%%%%%%%%%%%%%%%%%%%%%%%%%%%%%%%%%%
\section{Frobenius-fixed paths}%%
%%%%%%%%%%%%%%%%%%%%%%%%%%%%%%%%%%%%%%%%%

Our goal here is to explain the statement that the Frobenius-fixed paths of Besser and Vologodsky are motivic. This section presumes some familiarity with the formalism of ``algebraic geometry in a Tannakian category'' \cite{Deligne89} and the unipotent fundamental group \cite{DelGon}. 

\spar{A16}
Let $ \overline X \to Z$ be smooth and proper and let $X \subset \overline X$ be the complement of a snc divisor.  Consider points $x, y \in X(\hat \Oo_\pP)$ with values in the formal completion of the local ring at $\pP$ (\ref{Not}). Let $\opnm{UnVIC}(X_{K_\pP})$ be the $K_\pP$-Tannakian category of unipotent vector bundles with integrable connection, and let $\om_x$, $\om_y$ be the fiber functors
\[
\opnm{UnVIC}(X_{K_\pP}) \to \Vect K_\pP
\]
associated to $x,y$. There's a Frobenius action on $\Isom^\otimes (\om_x, \om_y)$; by Besser \cite{Besser} and Vologodsky \cite{Vologodsky}, $\Isom^\otimes (\om_x, \om_y)$ contains a unique Frobenius-fixed $K_\pP$-point $p^\m{Cr}$. 

\spar{A17}
When $X$ is of mixed Tate type, $x,y$ are $Z$-points of $X$, and 
\[
\om_x, \om_y: \opnm{UnVIC}(X_K) 
\rightrightarrows \Vect K
\]
are the associated fiber functors over $K$, the path torsor $\Isom^\otimes(\om_x, \om_y)$ is motivic. Thus, there's a prounipotent group object $\pi_1^\un(X,x)$ in $\MTM(Z)$, a torsor object $\pi_1^\un(X,x,y)$, and an isomorphism  
\[
\tag{$\dagger$}
\dR^*\pi_1^\un(X,x,y) = \Isom^\otimes(\om_x, \om_y).
\]
We may then consider the specialization $s^*_\pP\pi_1^\un(X,x,y)$, a torsor object in $\MTM(k)$, as well as the crystalline realization of the latter $\m{Cr}^* s_\pP^* \pi_1^\un(X,x,y)$, which carries the structure of a torsor under a prounipotent group in the category of $\phi$-modules over $K_\pP^0$. The Berthelot isomorphism
\[
\tag{*}
\dR^* \pi_1^\un(X,x,y)_{K_\pP} = \m{Cr}^* s_\pP^* \pi_1^\un(X,x,y)_{K_\pP}
\]
is compatible with the Frobenius actions. (Indeed, in the mixed Tate setting, this may be taken as a \textit{definition} of the Frobenius action on the left hand side; the arguments in Besser \cite{Besser} go through without the need to compare this definition with the one given there.)

\begin{sProposition}
\label{A18}
In the situation and the notation of paragraph \ref{A17}, there exists a unique morphism of affine scheme objects in $\MTM(k)$
\[
\tag{**}
\one \to s_\pP^* \pi_1^\un(X,x,y)
\]
whose crystalline realization maps via the Berthelot isomorphism \ref{A17}(*) to the Frobenius-fixed path $p^\m{Cr}$. 
\end{sProposition}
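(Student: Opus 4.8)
The plan is to produce the desired morphism as the \emph{unique} $\MTM(k)$-point of the torsor object $P:=s_\pP^*\pi_1^\un(X,x,y)$ (i.e.\ the unique morphism $\one\to P$ of affine scheme objects), and then to identify its crystalline realization with $p^\m{Cr}$ by appealing to the uniqueness of the latter among Frobenius-fixed points.

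First I would record that $\MTM(k)$ is semisimple: by Quillen's computation of the $K$-theory of finite fields --- invoked already in Theorem \ref{A7} --- the category $\MTM(k)$ is canonically equivalent to $\Rep_\QQ\Gm$, the category of finite-dimensional $\ZZ$-graded $\QQ$-vector spaces, so $\Ext^1_{\MTM(k)}(\one,V)=0$ for every $V$. Now $P$ is a torsor under the prounipotent group object $G:=s_\pP^*\pi_1^\un(X,x)$ of $\MTM(k)$. A d\'evissage along a central series of $G$ --- at each stage extending a point over a central vector-group extension, the obstruction lying in an $\Ext^1_{\MTM(k)}(\one,-)$ group that vanishes --- shows that the set of $\MTM(k)$-points of $P$ is nonempty; since $P$ is a torsor, this set is then a torsor under the set of $\MTM(k)$-points of $G$. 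Under $\MTM(k)\simeq\Rep_\QQ\Gm$ the latter is the group of $\Gm$-fixed $\QQ$-points of $G$, which is trivial because the $\Gm$-weights on $\Lie G$ are all negative (they are the weights of $\Lie\pi_1^\un$ of a mixed Tate variety, which has no weight-$0$ part, and $s_\pP^*$ preserves weights). Hence there is exactly one morphism $u\colon\one\to P$ in $\MTM(k)$.

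It remains to check that the crystalline realization of $u$ is $p^\m{Cr}$. Since the crystalline realization refines to the Frobenius-equivariant fiber functor $\Cr^*_\m{Frob}$ of paragraph \ref{B3}, the morphism $\Cr^*_\m{Frob}(u)$ is a morphism of $\phi$-modules, so the crystalline realization of $u$ is a Frobenius-fixed $K_\pP^0$-point of $\Cr^* P$. Extending scalars to $K_\pP$ and transporting through the Berthelot isomorphism \ref{A17}(*) --- which is compatible with Frobenius and identifies $\Cr^* P_{K_\pP}$ with $\dR^*\pi_1^\un(X,x,y)_{K_\pP}=\Isom^\otimes(\om_x,\om_y)_{K_\pP}$, using \ref{A17}($\dagger$) --- yields a Frobenius-fixed $K_\pP$-point of $\Isom^\otimes(\om_x,\om_y)_{K_\pP}$. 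By Besser \cite{Besser} and Vologodsky \cite{Vologodsky} (paragraph \ref{A16}) the only such point is $p^\m{Cr}$; hence the crystalline realization of $u$ is $p^\m{Cr}$, and uniqueness is automatic since $u$ is the only morphism $\one\to P$ in $\MTM(k)$ at all.

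I expect the main obstacle to be the d\'evissage producing the $\MTM(k)$-point $u$ (equivalently, the triviality of the torsor $P$ inside $\MTM(k)$): this is the only step with genuine content, and it rests squarely on the semisimplicity of $\MTM(k)$, i.e.\ on Quillen's theorem, exactly as the proof of Theorem \ref{A7} does. The remaining ingredients --- the two comparison isomorphisms of paragraph \ref{A17} and their compatibility with Frobenius --- have already been arranged in paragraphs \ref{A16}--\ref{A17} (the Frobenius on the de Rham side even being \emph{defined} so as to make the Berthelot isomorphism Frobenius-compatible), so only Tannakian bookkeeping is left.
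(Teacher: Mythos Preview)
Your proof is correct and follows essentially the paper's approach: both arguments rest on the semisimplicity of $\MTM(k)$ to trivialize the torsor $P$ and then identify the crystalline realization of the resulting point with $p^\m{Cr}$ via its Frobenius-equivariance. Your d\'evissage is simply an unpacking of the general fact the paper quotes without proof (``any torsor object under a prounipotent group object in a semisimple Tannakian category is trivial'').

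The only genuine divergence is in the uniqueness step. The paper argues uniqueness \emph{a posteriori}: any morphism $\one\to P$ realizes to a $\phi$-fixed point, the $\phi$-fixed point is unique by Besser--Vologodsky, and then faithfulness of $\Cr^*$ forces the morphism itself to be unique. You instead prove the stronger statement that $P$ has a \emph{unique} $\MTM(k)$-point outright, by observing that the $\Gm$-fixed points of $G$ are trivial since $\Lie G$ carries only strictly negative weights. Your route is more intrinsic (no appeal to faithfulness of the realization) but costs you the extra observation about weights; the paper's route is shorter and uses only what has already been set up.
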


\begin{proof}
Any torsor object under a prounipotent group object in a semisimple Tannakian category is trivial, i.e. admits a point, as in (**). The crystalline realization is a map of $\phi$-modules, hence corresponds to a $\phi$-fixed point. Uniqueness then follows from the faithfulness of the realization functor.
\end{proof}

\spar{V1}
\textbf{Variant.}
When the theory of tangential base-points is available, the same holds with $Z$-integral tangential base-points in place of $x$ or $y$.

\spar{A19}
Beyond the mixed Tate setting, one expects Tannakian categories of mixed motives $\opnm{MM}(Z)$, and $\opnm{MM}(k)$, as well as a specialization functor
\[
s_\pP^*: \opnm{MM}(Z) \to \opnm{MM}(k)
\]
between them. Moreover, $\opnm{MM}(k)$ is expected to be semisimple. Standard expected properties would also lead to an associated theory of unipotent fundamental groups in close analogy with \ref{A17}.  

\begin{sProposition}
\label{conj}
In the situation and the notation of paragraph \ref{A16}, suppose $x,y$ are $Z$-points of $X$. Assume conjectures as indicated in paragraph \ref{A19}. Then there exists a unique morphism of affine scheme objects in $\opnm{MM}(k)$
\[
\one \to s_\pP^* \pi_1^\un(X,x,y)
\]
whose crystalline realization maps via the Berthelot isomorphism to the Frobenius-fixed de Rham path $p^\m{Cr}$. 
\end{sProposition} 

\begin{proof}
The proof of Proposition \ref{A18} applies with no change.
\end{proof}

\noindent
In fact, one may well expect Proposition \ref{conj} to extend to include the special de Rham paths identified by Vologodsky \cite{Vologodsky} in the semistable-reduction case (see also Betts-Litt \cite{betts2019semisimplicity}). 

%%%%%%%%%%%%%%%%%%%%%%%%%%%%%%%%%%
\section{Example: $p$-Adic multiple polylogarithms}%%
%%%%%%%%%%%%%%%%%%%%%%%%%%%%%%%%%%

Finally, in this section, we construct certain functions $I_b^c (\om_1 \cdots \om_m) \in \Oo H(Z)$ on the principal homogeneous space $H(Z)$ (\ref{extscal}**, \ref{A12}) which correspond under the equivalence (\ref{A7}) to Goncharov's motivic iterated integrals \cite{GonGal}; these are given by certain matrix entries of motivic path torsors. Moreover, we show that their Andr\'e periods are given by special values of Coleman functions. Thus, loosely speaking, \textit{Andr\'e periods of motivic path torsors are given by special values of Coleman functions}. 

\spar{A20}
Let $X$ be the complement of a family of disjoint sections $\infty, a_1, \dots, a_n$ of $\PP^1_Z \to Z$ and let
\[
\eta_i = \frac{dt}{t-a_i}.
\]
Fix a sequence $\om_1, \dots, \om_m$ of $1$-forms chosen from among the forms $\eta_i$, fix $Z$-points $b, c \in X(Z)$, and fix a prime $\pP \in Z$. The $\pP$-adic iterated integral
\[
\tag{*}
\int_b^c \om_1 \cdots \om_m \in K_\pP
\]
may be defined as follows. The de Rham path torsors are equipped with a \emph{nonabelian Hodge filtration}. A study of the structure of the Tannakian category of unipotent vector bundles with integrable connection on $X_{K}$ carried out by Deligne \cite{Deligne89} shows, on the one hand, that the de Rham path torsor \ref{A17}($\dagger$) is trivialized by the unique point $p^H$ in Hodge filtered degree $0$, and, on the other hand, that $\dR^*\pi_1^\un(X,x)$ is free prounipotent with a canonical set of free generators corresponding to monodromy about each of the punctures $a_1, \dots, a_n$. (See e.g. \cite{mtmue} for a quick summary of relevant facts concerning free prounipotent groups.) It follows that the word $\om = \om_1 \cdots \om_m$ gives rise to a function $f_\om \in \Oo\big(\dR^*\pi_1^\un(X,x,y)\big)$. By definition, 
\[
\int_b^c \om_1 \cdots \om_m = f_\om(p^\m{Cr}).
\]

\spar{A21}
\textbf{Variant.} We allow the upper and lower limits $b$ and $c$ to be $Z$-integral tangent vectors along the punctures $a_i$. See Deligne \cite{Deligne89} for tangential fiber functors, Deligne-Goncharov \cite{DelGon} for motivic path torsors between $Z$-integral tangent vectors, and e.g. Furusho \cite{FurushoPMZVII} for a discussion of Frobenius-fixed paths between tangential fiber functors. 

\spar{A24}
\textbf{Variant.} We allow the upper limit to vary over $X(\hat \Oo_\pP)$. The theory of Besser-Coleman functions \cite{Besser} applies to define a locally analytic function
\[
\int_b^x \om_1 \cdots \om_m.
\]

\spar{A23}
Special cases of the above go by special names. When $X = \thrpl$ and $b = \vec{1}_0$ is the tangent vector ``$1$ at $0$'', then
\[
\int_{\vec{1}_0}^x \om_1 \cdots \om_m
\]
is known as a \emph{$p$-adic multiple polylogarithm}. When, moreover, the upper limit is taken to be $c = -\vec{1}_1$, the integral
\[
\int_{\vec{1}_0}^{-\vec{1}_1} \om_1 \cdots \om_m
\]
is known as a \emph{$p$-adic multiple zeta value}.

\medskip
The $\pP$-adic iterated integral \ref{A20}(*) is known to be in the image of the classical $\pP$-adic period map \ref{B4}(*), so by Theorem \ref{A7}, it's also in the image of the Ancona-\Fratila period map \ref{B2}($\dagger$). Our goal here is just to spell this out explicitly, and to point out the connection with our observation in Proposition \ref{A18} that the Frobenius-fixed de Rham paths are motivic. 

\spar{A25}
Let $\Uu(X,y)$ be the completed universal enveloping algebra of $\pi_1^\un(X,y)$. By pushing out along the inclusion
\[
\pi_1^\un(X,y) \to \Uu(X,y),
\]
we may turn the path torsor $\pi_1^\un(X,x,y)$ into a left $\Uu(X,y)$-module, which we denote by $\Uu(X,x,y)$. 
%The function $f_\om$ (\ref{A20}) factor through a finite-rank motivic quotient \[\Uu(X,x,y) \surj \Uu(X,x,y)'.\]
The motivic path of Proposition \ref{A18} gives rise to morphism
\[
p^\m{mot}: \one \to s_\pP^*\Uu(X,x,y)
\]
in the pro-category $\opnm{Pro} \MTM (k)$. The data
\[
I_b^c (\om_1 \cdots \om_m) :=
[\Uu(X,x,y), p^\m{mot},  f_\om]
\]
induces a function 
\begin{align*}
H(Z) = \Hom^\otimes (\ze \circ s_\pP^*, \dR^*)
&\to \AA^1_K
\\
\cC 
&\mapsto I_b^c (\om_1 \cdots \om_m)(\cC)
\end{align*}
defined on points (with values in an arbitrary $K$-algebra which we take to be $K$ only in order to lighten notation) in analogy with Tannakian matrix entries: $p^\m{mot}$ is an element of $\ze\Big(s_\pP^*\big(\Uu(X,x,y)\big)\Big)$ (\ref{A1}**); applying $\cC$, we get a point $\cC p^\m{mot}$ of $\dR^*\Uu(X,x,y)$; the function $f_\om$ extends to a linear functional on $\dR^*\Uu(X,x,y)$; we define
\[
I_b^c (\om_1 \cdots \om_m)(\cC)
:=
f_\om \big( \cC p^\m{mot} \big).
\]

\begin{sProposition}
In the situation and the notation of paragraph \ref{A25}, 
\[
\m{per}^\m{AF}\big( I_b^c(\om_1 \cdots \om_m) \big) 
=
\int_b^c \om_1 \cdots \om_m.
\]
\end{sProposition}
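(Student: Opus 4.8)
The plan is to unwind the definition of $\per^\m{AF}$ from paragraph \ref{B2} and match it, term by term, with the definition of the $\pP$-adic iterated integral from paragraph \ref{A20}, using Theorem \ref{A7} to bridge the two sides. Recall that $\per^\m{AF}$ is evaluation at the point $\cber \in H(Z)(K_\pP)$, so the left-hand side is by construction $I_b^c(\om_1 \cdots \om_m)(\cber) = f_\om\big(\cber \, p^\m{mot}\big)$, where $\cber\, p^\m{mot} \in \dR^*\Uu(X,x,y)_{K_\pP}$ is obtained by applying the natural transformation $\cber$ (extended to pro-objects, and to the enveloping-algebra module $\Uu(X,x,y)$) to the element $p^\m{mot} \in \ze\big(s_\pP^*\Uu(X,x,y)\big)$. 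So the content of the proposition is the identity $\cber\, p^\m{mot} = p^\m{Cr}$ as $K_\pP$-points of $\dR^*\Uu(X,x,y)_{K_\pP}$, after which applying $f_\om$ to both sides gives exactly $\int_b^c \om_1 \cdots \om_m = f_\om(p^\m{Cr})$ by the definition in \ref{A20}.

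First I would recall, from the proof of Theorem \ref{A7} (or directly from paragraph \ref{B2}), that $\cber = s_*^\pP \pP^\m{Cr}_{K_\pP} \circ \iota$ on the level of natural transformations; concretely, evaluating $\cber$ on an element $v \in \ze(s_\pP^* M) = \Hom(\QQ(0), s_\pP^* M)$ amounts to taking its crystalline realization $\Cr^* v$, applying it to $1$, and transporting to $\dR^* M_{K_\pP}$ via the Berthelot comparison $\pP^\m{Ber}_M$. Second, I would apply this with $M$ running over the (pro-)system defining $\Uu(X,x,y)$ and $v = p^\m{mot}$. By the very construction of $p^\m{mot}$ in paragraph \ref{A25} — it is the image, under $\ze \circ s_\pP^*$ applied to the pushout map $\pi_1^\un(X,x,y) \to \Uu(X,x,y)$, of the motivic path $\one \to s_\pP^*\pi_1^\un(X,x,y)$ of Proposition \ref{A18} — its crystalline realization is, by the defining property of that proposition, precisely the element of $\Cr^* s_\pP^* \Uu(X,x,y)_{K_\pP}$ that corresponds under the Berthelot isomorphism \ref{A17}(*) to the Frobenius-fixed path $p^\m{Cr}$ (pushed forward to the enveloping algebra). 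Chaining the Berthelot comparison for $\Uu(X,x,y)$ with this identification gives $\cber\, p^\m{mot} = p^\m{Cr}$ in $\dR^*\Uu(X,x,y)_{K_\pP}$, as desired; applying the linear functional $f_\om$ then yields the claimed equality with $f_\om(p^\m{Cr}) = \int_b^c \om_1 \cdots \om_m$.

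The main obstacle is bookkeeping rather than conceptual: one must check that everything is compatible with passage to the pro-category and to the universal enveloping algebra module structure — i.e., that the construction $p^\m{mot} \mapsto \cber\, p^\m{mot}$ genuinely computes the Berthelot transport of the crystalline path on the infinite-dimensional object $\Uu(X,x,y)$, and that $f_\om$, which a priori lives on $\dR^*\pi_1^\un(X,x,y)$, extends compatibly to the linear functional on $\dR^*\Uu(X,x,y)$ used on both sides. This requires knowing that the Berthelot comparison, the crystalline and de Rham realizations, and the natural transformation $\cber$ all commute with the relevant (co)limits and with pushout along $\pi_1^\un \to \Uu$ — which holds because all of these are (ind-/pro-)extensions of exact monoidal functors and natural transformations, but deserves an explicit remark. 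A secondary point to state carefully is that the Frobenius-fixed path $p^\m{Cr}$ appearing in \ref{A16} (defined via $\Isom^\otimes(\om_x,\om_y)$ and Besser–Vologodsky) is the same as the one whose matrix entry $f_\om(p^\m{Cr})$ defines the $p$-adic iterated integral in \ref{A20}; this is exactly the identification already invoked there, so it may simply be cited.
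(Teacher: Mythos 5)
Your proposal is correct and follows the same chain of equalities as the paper's (very terse) proof: $\per^\m{AF}(I_b^c(\om)) = I_b^c(\om)(\cber) = f_\om(\cber\, p^\m{mot}) = f_\om(p^\m{Cr}) = \int_b^c \om$. You simply supply the justification for the middle equality $\cber\, p^\m{mot} = p^\m{Cr}$ (via the definition of $\cber$ in \S\ref{B2} and Proposition \ref{A18}), which the paper leaves implicit.
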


\begin{proof}
We have
\begin{align*}
\m{per}^\m{AF}\big( I_b^c(\om_1 \cdots \om_m) \big)
&=
I_b^c(\om_1 \cdots \om_m)(\cber)
\\
&=
f_\om(\cber p^\m{mot})
\\
&=
f_\om(p^\m{Cr})
\\
&= 
\int_b^c \om_1 \cdots \om_m. \qedhere
\end{align*}
\end{proof}

%%%%%%%%%%%%%%%%%%%%
%%%%%%%%%%%%%%%%%%%
\bibliography{AF_refs}%%%%%%
%%%%%%%%%%%%%%%%%%
\bibliographystyle{alpha}%%%
%%%%%%%%%%%%%%%%%
%%%%%%%%%%%%%%%%%

\vfill

\Small

\noindent
\textsc{Ishai Dan-Cohen} 

 \noindent
\textsc{Email address:} \url{ishaida@bgu.ac.il}

\noindent
\url{https://ishaidcgit.github.io/}

\end{document}